\newtheorem{Exaexa}{Example}[section]
\numberwithin{equation}{section}
\newtheorem{thm}{Theorem}[section]
\theoremstyle{definition}
\newtheorem{dfn}{Definition}[section]
\newtheorem{rem}{Remark}[section]
\theoremstyle{plain}
\newtheorem{lem}[thm]{Lemma}
\newtheorem{prop}{Proposition}[section]
\newtheorem{cor}{Corollary}[section]
\begin{document}
\begin{center}
{\large \textbf{Weight distributions of  all irreducible $\mu$-constacyclic codes of length $\ell^n$}}\\\medskip\medskip
Manjit Singh \\
E-mail: manjitsingh.math@gmail.com \\
 Department of Mathematics,\\ Deenbandhu Chhotu Ram University of Science 
and Technology, Murthal-131039, Sonepat, India.\medskip \\
\end{center}
\begin{abstract}  Let $\mathbb{F}_q$ be a finite field of order $q$ and integer $n\ge 1$. Let $\ell$ be a  prime such that $\ell^k|(q-1)$ for some integer $k\ge 1$ and  $\mu$ be an element of order $\ell^k$ in $\mathbb{F}_q$.  In this paper, we determine the weight distributions of all irreducible $\mu$-constacyclic codes of length $\ell^n$ over $\mathbb{F}_q$. Explicit expressions for the generator polynomials and  codewords of these codes are also obtained. \\\quad\newline
\noindent\textbf{Mathematics Subject Classifications (2010)}:  12E05; 94B15; 11T55.\\
{\bf{Keywords:}} Irreducible polynomials; Constacyclic codes; Weight distributions; Generator polynomials.
\end{abstract}
\section{Introduction}

Throughout in this paper, $\mathbb{F}_q$ will denote a finite field with $q$ elements. For any integer $m\ge1$, $\mathbb{F}_q^m=\{(c_1,c_2,\cdots, c_m):c_i\in \mathbb{F}_q, 1\le i\le m\}$ is a linear space over $\mathbb{F}_q$. A linear $[m,k]_q$ code of length $m$ over $\mathbb{F}_q$ is an $k$-dimensional subspace of $\mathbb{F}_q^m$ with $q^k$  codewords.   The Hamming distance between two codewords ${{c}}=(c_1,c_2,\cdots,c_m)$ and ${{c^\prime}}=(c_1^{\prime},c_2^{\prime},\cdots,c_m^{\prime})$ in $\mathbb{F}_q^m$ is the number of components $i$ such that $c_i\neq c_i^{\prime}$. The Hamming weight of a vector ${{c}}=(c_1,c_2,\cdots,c_m)$ is the number of nonzero components of ${{c}}$. The minimum Hamming distance or simply the minimum distance  of a linear code is also equal to the minimum Hamming weight over all nonzero codewords (see \cite{huff,ling04}).

Let $A_i$  be the number of codewords  of  a linear code $C$ of length $m$ over $\mathbb{F}_q$ with Hamming weight $i$, where $0\le i\le m$. The sequence $(1, A_1, A_2, \cdots, A_m)$ is called the weight distributions of the code $C$.  Note that $A_{0}=1$ and $A_{i}=0$ for all $1\le i<d$, where $d$ is the minimum Hamming distance of the code. The weight distributions for irreducible cyclic codes has been studied by many authors (see \cite{chen15,dinh, sharma, vander, vega, zhu}).  

 Constacyclic codes play a very significant role in the theory of error-correcting codes and  are  also  known for their rich algebraic structure. Constacyclic codes can be efficiently encoded using shift registers, which explains their preferred
role in engineering. Over the last few decades, much has been written about constacyclic codes \cite{ay, chen12,  dinh13, li,raka}. In \cite{zhu}, Zhu et al. obtained the weight distribution of a class of cyclic codes of length $\ell^m$, where $\ell $ is a prime satisfying that $\ell^v||(q-1)$ and $v$ is a positive integer, and that $4|(q-1)$ if $\ell=2$. Chen et al. \cite{chen12}  introduced an equivalence relation, called isometry, to classify constacyclic codes over finite fields. They further characterized generator polynomials of constacyclic codes of length $\ell^tp^s$ over $\mathbb{F}_{p^n}$, where $\ell$, $p$ are distinct primes and $s$, $t$, $n$ are positive integers. Recently,   Li and Yue \cite{li} obtained  the weight distributions of all irreducible constacyclic codes and their duals of length $\ell^n$ over $\mathbb{F}_q$ when $\ell^v||(q+1)$, where $\ell$ is prime satisfying $\gcd(\ell, q(q-1))=1$.   Here it is important to note that the weight distributions of all irreducible constacyclic codes of length $\ell^n$ over $\mathbb{F}_q$ are not considered so far when $\ell^v||(q-1)$, where $\ell$ is prime satisfying $\gcd(\ell, q)=1$.

Let $\ell$ be a prime such that $\ell^k|(q-1)$ for some integer $k\ge 1$ and   $\mu$ be a primitive $\ell^k$th root of unity in $\mathbb{F}_q$, where $4|(q-1)$ if $\ell=2$. The object of this paper is to determine the weight distributions of  all irreducible $\mu$-constacyclic codes of length $\ell^n$  over $\mathbb{F}_q$. 

 We introduce some preliminaries and auxiliary results in Section 2 and prove our main results in Section 4.  In Section 3, we revisit the irreducible factorization of $x^{\ell^n}-\mu$ over $\mathbb{F}_q$. The weight distributions of all irreducible $\mu$-constacyclic codes of length $\ell^n$ are determined in Section 4.  The explicit expressions of generator polynomials and  codewords of these codes are also obtained. In addition, two numerical examples are given to validate the results.

\section{Preliminaries}  
 Let $\mu$ be a nonzero element in $\mathbb{F}_q$ and integer $m\ge1$. A linear  code $C$ of length $m$ over $\mathbb{F}_q$ is called a $\mu$-constacyclic code if   \begin{center}$( c_1,c_2,\cdots,
c_{m-1},0)+c_0(0,0,\cdots,0,{\mu})\in C$  $\forall$  $(c_0, c_1,c_2,\cdots,
c_{m-1})\in C$.\end{center} The  code $C$ is  called cyclic  if $\mu=1$ and  negacyclic  if $\mu=-1$. If $\gcd(m,q)=1$, a $\mu$-constacyclic code of length $m$ is called simple-root code; otherwise it is called repeated-root code.  Identify a vector $(a_0,a_1,\cdots, a_{m-1})\in\mathbb{F}_q^m$ with a polynomial $a_0x^{m-1}+a_1x^{m-2}+\cdots+a_{m-1}$ modulo $(x^m-\mu)$ in  $\mathbb{F}_q[x]$ and $\gcd(m,q)=1$.  A simple-root $\mu$-constacyclic code $C$  can be represented as an ideal of the quotient ring  
 $\mathbb{F}_q[x]/\langle x^m-\mu\rangle$. It is well-known that each ideal in $\mathbb{F}_q[x]/\langle x^m-\mu\rangle$ is of the form $\langle g(x)\rangle$, where $g(x)$ is a monic divisor of $x^m-\mu$ in $\mathbb{F}_q[x]$. The polynomial $g(x)$ is known as the generator polynomial of the code $C$. An $\mu$-constacyclic code $C$ is called irreducible or minimal over $\mathbb{F}_q$ if $h(x)=(x^m-\mu)/g(x)$ is an irreducible polynomial over $\mathbb{F}_q$, where the polynomial $h(x)$ is known as the parity check polynomial or simply check polynomial of $C$.
\begin{dfn}[see \cite{ling04}] Two $(m,M)$-codes, where $M$ is the size of the code, over $\mathbb{F}_q$ are equivalent if one can be obtained from the other by a combination of operations of the following type:\begin{enumerate}
\item[(i)] permutation of the $m$ digits of the codewords;
\item[(ii)] multiplication of the symbols appearing in a fixed position by a nonzero scalar.\end{enumerate}\end{dfn}

\begin{prop}\cite[Theorem 7.5]{bla75}\label{qchain}  Let $\eta_1$ and $\eta_2$ be any two primitive $n$th roots of unity in a suitable extension field of $\mathbb{F}_q$. Then if $\Omega$ is a union of $q$ chains mod $n$, the polynomials \begin{center}$g_{1}(x)=\prod_{i\in \Omega}(x-\eta_1^i)$ and $g_{2}(x)=\prod_{i\in \Omega}(x-\eta_2^i)$ \end{center} generate equivalent codes. \end{prop}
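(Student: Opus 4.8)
The plan is to realize the asserted equivalence by an explicit coordinate permutation coming from a multiplier on $\mathbb{Z}/n\mathbb{Z}$. First, since $\eta_1$ and $\eta_2$ are both primitive $n$th roots of unity, they generate the same cyclic group of order $n$, so there is an integer $t$ with $\gcd(t,n)=1$ and $\eta_2 = \eta_1^{t}$. Because $\Omega$ is a union of $q$-chains mod $n$, both $g_1$ and $g_2$ lie in $\mathbb{F}_q[x]$ and divide $x^n-1$; hence $C_1=\langle g_1\rangle$ and $C_2=\langle g_2\rangle$ are cyclic codes of length $n$ over $\mathbb{F}_q$, and I may describe each by its set of zeros (its defining set) relative to the single fixed root $\eta_1$. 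Explicitly, $c(x)\in C_1$ iff $c(\eta_1^{i})=0$ for all $i\in\Omega$, while $c(x)\in C_2$ iff $c(\eta_1^{ti})=0$ for all $i\in\Omega$; thus the defining set of $C_2$ with respect to $\eta_1$ is $t\Omega=\{\,ti \bmod n : i\in\Omega\,\}$.

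The key step is the multiplier map. Fix $s$ with $st\equiv 1\pmod n$ and define $\pi:\mathbb{F}_q[x]/\langle x^n-1\rangle\to\mathbb{F}_q[x]/\langle x^n-1\rangle$ by $\pi(a(x))=a(x^{s})\bmod (x^n-1)$. Since $\gcd(s,n)=1$, the map $i\mapsto si\bmod n$ permutes $\{0,1,\dots,n-1\}$, so $\pi$ merely permutes the $n$ coordinates of each codeword and is therefore an equivalence of codes of type (i) in Definition 2.1 (no scalar multiplication is even needed). It then remains to check $\pi(C_1)=C_2$: for $a\in C_1$ put $b=\pi(a)$; then $b(\eta_1^{j})=a(\eta_1^{sj})$, which vanishes precisely when $sj\bmod n\in\Omega$, i.e. when $j\in t\Omega$. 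Hence the zeros of $b$ with respect to $\eta_1$ are exactly the defining set of $C_2$, giving $\pi(C_1)\subseteq C_2$. As $\pi$ is a linear bijection and $\dim C_1 = n-|\Omega| = \dim C_2$, this forces $\pi(C_1)=C_2$, exhibiting $C_1$ and $C_2$ as equivalent codes.

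The only genuinely delicate points are bookkeeping ones. First, one must verify that $t\Omega$ is again a union of $q$-chains mod $n$, so that $C_2$ is honestly a cyclic $\mathbb{F}_q$-code and the zero-set description is legitimate; this holds because multiplication by $t$ commutes with multiplication by $q$ on $\mathbb{Z}/n\mathbb{Z}$, so it sends the $q$-chain through $i$ to the $q$-chain through $ti$. Second, one must keep the direction of the multiplier straight, using $s\equiv t^{-1}$ rather than $t$, so that $\pi$ carries the defining set $\Omega$ onto $t\Omega$ and not onto $t^{-1}\Omega$. I expect this indexing—translating between zeros taken with respect to $\eta_1$ and with respect to $\eta_2=\eta_1^{t}$—to be the main place where care is required; everything else reduces to the standard fact that a multiplier permutation sends a cyclic code with defining set $T$ to the cyclic code with defining set $s^{-1}T$.
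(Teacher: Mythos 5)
Your proof is correct, but note that the paper itself offers no proof of this proposition: it is quoted as a known result from Blake and Mullin (Theorem 7.5 there) and used as a black box, so there is no in-paper argument to compare yours against. What you have written is the standard multiplier-equivalence proof that underlies the cited theorem, and it is sound: write $\eta_2=\eta_1^t$ with $\gcd(t,n)=1$, identify the defining set of $C_2=\langle g_2\rangle$ relative to $\eta_1$ as $t\Omega$, and apply the substitution $\pi\colon a(x)\mapsto a(x^s)\bmod (x^n-1)$ with $st\equiv1\pmod n$, which permutes monomials and hence coordinates (an equivalence of type (i) in Definition 2.1, with no scalar multiplications needed). You correctly handle the two genuine checkpoints: $t\Omega$ is again a union of $q$-chains because multiplication by $t$ commutes with multiplication by $q$ modulo $n$, and the direction of the multiplier is right ($s=t^{-1}$, not $t$), since $\pi$ carries a cyclic code with defining set $T$ to one with defining set $s^{-1}T=tT$. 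The inclusion $\pi(C_1)\subseteq C_2$ together with the dimension count $\dim C_1=n-|\Omega|=\dim C_2$ then forces equality. One small wording slip: $b(\eta_1^j)=a(\eta_1^{sj})$ vanishes \emph{whenever} $sj\bmod n\in\Omega$, not \emph{precisely} when this holds (a codeword may well vanish at roots outside the defining set); since you only use the one-sided implication, nothing breaks. As a bonus, your argument shows the equivalence is realized by a coordinate permutation alone, which is slightly stronger than the paper needs when it invokes this proposition in Lemma \ref{equiv} to conclude that the codes $C_{k,i}$ share a common weight distribution.
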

\begin{lem}\cite[p.72]{ling04}\label{cc} Equivalent linear codes  have the same length, dimension
and distance.  \end{lem}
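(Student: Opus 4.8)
The plan is to realize the two elementary operations defining equivalence as invertible $\mathbb{F}_q$-linear self-maps of $\mathbb{F}_q^m$ and to check that each such map preserves length, dimension and Hamming weight; since any equivalence is a finite composition of these operations, and each of the three properties survives composition, the lemma follows. Throughout, I would write $C$ for a linear $[m,k]_q$ code and $C'$ for an equivalent code, so that $C' = T(C)$ for a suitable map $T$ built from operations of type (i) and (ii).

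Both operations act on the full space $\mathbb{F}_q^m$ and return vectors with the same number of coordinates, so the length $m$ is visibly unchanged; this disposes of the first invariant. For the remaining two, I would record the matrix form of each operation. Operation (i) is described by a permutation matrix $P_\sigma$, where $\sigma \in S_m$ reorders the coordinates, and operation (ii) by a nonsingular diagonal matrix $D = \mathrm{diag}(\lambda_1,\dots,\lambda_m)$ with each $\lambda_i \in \mathbb{F}_q^{\times}$ (the scaling of a single fixed position by a nonzero scalar being the special case where all but one $\lambda_i$ equal $1$). Each is an invertible linear transformation of $\mathbb{F}_q^m$, and hence so is any composition $T = P_{\sigma_r} D_r \cdots P_{\sigma_1} D_1$ that realizes the equivalence.

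Because $T$ is an $\mathbb{F}_q$-linear isomorphism, it carries the $k$-dimensional subspace $C$ onto the subspace $C' = T(C)$ of the same dimension $k$; this settles the dimension. For the minimum distance, the key observation is that each elementary operation preserves the Hamming weight of every vector: a permutation of coordinates merely rearranges the nonzero entries, while multiplication of a coordinate by a nonzero scalar sends nonzero entries to nonzero entries and fixes the zero entries. Consequently $\mathrm{wt}(T(c)) = \mathrm{wt}(c)$ for all $c \in \mathbb{F}_q^m$. Combining this with linearity, for any $c, c' \in C$ one gets $d(T(c), T(c')) = \mathrm{wt}(T(c) - T(c')) = \mathrm{wt}(T(c - c')) = \mathrm{wt}(c - c') = d(c, c')$, so $T$ is a Hamming isometry from $C$ onto $C'$. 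Since $T$ is a bijection fixing $0$, it matches the nonzero codewords of $C$ bijectively with those of $C'$, whence the minimum nonzero weight---and therefore the minimum distance---is the same for both codes.

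I expect no serious obstacle here; the only point deserving a little care is to confirm that weight preservation holds at the level of individual vectors for each elementary operation and then to leverage linearity to pass from weights to distances. Once the operations are written as monomial (permutation-times-diagonal) matrices, all three claims reduce to standard properties of invertible linear maps and of the Hamming weight, and the composition of equivalence operations presents no difficulty.
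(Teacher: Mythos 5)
Your proof is correct and complete. Note, however, that the paper offers no proof of this statement at all: it is quoted as a known fact from Ling and Xing's textbook (the citation \cite{ling04}, p.~72) and used as a black box, so there is no in-paper argument to compare yours against. Your route---writing each equivalence operation as a monomial (permutation-times-diagonal) matrix, observing that such matrices are weight-preserving linear isomorphisms of $\mathbb{F}_q^m$, and deducing invariance of length, dimension, and minimum distance from invertibility, linearity, and the isometry property---is exactly the standard textbook justification, and it correctly fills in what the paper leaves to the reference.
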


In literature, the irreducibility of  binomials  over $\mathbb{F}_q$ has been completely characterized early in 1866 (Serret) in the following way. 
\begin{lem}\cite[Theorem 3.75]{lidl97}  \label{bir}  Let $t\ge 2$ be an integer and  $a\in\mathbb{F}_q^*$. Then the binomial $x^{t}-a$ is irreducible in $\mathbb{F}_q[x]$ if and only if the following two conditions are satisfied: (i) each prime factor of $t$ divides the order $e$ of $a$ in $\mathbb{F}_q^*$, but  not  $(q-1)/e$; (ii) $q\equiv1\pmod 4$ if $t\equiv0\pmod 4$.
\end{lem}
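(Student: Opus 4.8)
The plan is to deduce the criterion from the classical Vahlen--Capelli theorem over an arbitrary field and then translate its hypotheses into the arithmetic conditions (i) and (ii) by exploiting the cyclic structure of $\mathbb{F}_q^{*}$. Recall that Capelli's theorem asserts that, for $a\in\mathbb{F}_q^{*}$ and $t\ge 2$, the binomial $x^{t}-a$ is irreducible over $\mathbb{F}_q$ if and only if (a) $a$ is not a $p$th power in $\mathbb{F}_q^{*}$ for every prime $p\mid t$, and (b) $a\notin -4\,(\mathbb{F}_q^{*})^{4}$ whenever $4\mid t$. If this criterion is not taken as given, I would establish it by induction on the number of prime factors of $t$: for a prime power $t=p^{m}$ one shows that $x^{p^{m}}-a$ is irreducible precisely when $a$ is not a $p$th power (with an extra obstruction when $p=2$ and $m\ge 2$), by taking a root $\alpha$ with $\alpha^{t}=a$ and tracking the degree $[\mathbb{F}_q(\alpha):\mathbb{F}_q]$; the composite case then follows by combining the coprime prime-power factors. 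The special role of $-4\,(\mathbb{F}_q^{*})^{4}$ in (b) is visible from the factorization
\[
x^{4}+4b^{4}=(x^{2}-2bx+2b^{2})(x^{2}+2bx+2b^{2}),
\]
which already splits $x^{4}-a$ when $a=-4b^{4}$.

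Next I would translate condition (a) into condition (i). Write $\mathbb{F}_q^{*}=\langle\gamma\rangle$, a cyclic group of order $q-1$, and put $a=\gamma^{s}$, so that the order of $a$ is $e=(q-1)/\gcd(s,q-1)$ and hence $\gcd(s,q-1)=(q-1)/e$. For a prime $p$, the element $a$ is a $p$th power in $\mathbb{F}_q^{*}$ if and only if the congruence $pr\equiv s\pmod{q-1}$ is solvable, that is, if and only if $\gcd(p,q-1)\mid s$. Comparing $p$-adic valuations $v_p$ in the equality $\gcd(s,q-1)=(q-1)/e$, one checks that $a$ fails to be a $p$th power exactly when $p\mid e$ and $p\nmid (q-1)/e$. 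Running this over all primes $p\mid t$ turns Capelli's condition (a) into condition (i); note in passing that (i) forces $\gcd(t,q)=1$, so the binomial is automatically separable and the degenerate characteristic-dividing-$t$ case cannot arise.

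The main obstacle is the mod-$4$ bookkeeping needed to pass from condition (b) to condition (ii). Assume $4\mid t$, so that (i) applied to $p=2$ makes $q$ odd with $2\mid e$ and $v_2(e)=v_2(q-1)$, and in particular renders $a$ a non-square. I would then locate the coset $-4\,(\mathbb{F}_q^{*})^{4}$ inside $\mathbb{F}_q^{*}/(\mathbb{F}_q^{*})^{4}$. When $q\equiv 3\pmod 4$ one has $(\mathbb{F}_q^{*})^{4}=(\mathbb{F}_q^{*})^{2}$ and $-1$ is a non-square, so $-4\,(\mathbb{F}_q^{*})^{4}$ is precisely the set of non-squares, which already contains the non-square $a$; hence (b) fails and $x^{t}-a$ is reducible. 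When $q\equiv 1\pmod 4$, the quotient $\mathbb{F}_q^{*}/(\mathbb{F}_q^{*})^{4}$ is cyclic of order $4$, and since $-1$ and every square lie in its even cosets, the element $-4=(-1)\cdot 2^{2}$ sits in an even coset while the non-square $a$ sits in an odd one; therefore $a\notin -4\,(\mathbb{F}_q^{*})^{4}$ and (b) holds automatically. Consequently, under (i), condition (b) is equivalent to $q\equiv 1\pmod 4$, which is exactly condition (ii), completing the identification of Capelli's criterion with the stated conditions (i)--(ii).
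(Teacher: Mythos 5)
Your proposal is correct, but there is no in-paper argument to compare it against: the paper states Lemma \ref{bir} purely as a quoted classical result, citing Theorem 3.75 of Lidl--Niederreiter, and never proves it. Your route---deducing the criterion from the Vahlen--Capelli theorem---is thus a genuine alternative to the textbook treatment, which argues directly with multiplicative orders of the roots of $x^t-a$ rather than through power-residue classes. The substantive content of your proposal, the translation step, is sound: writing $a=\gamma^s$ and comparing $p$-adic valuations in $\gcd(s,q-1)=(q-1)/e$ correctly identifies ``$a$ is not a $p$th power'' with ``$p\mid e$ and $p\nmid (q-1)/e$,'' and you rightly observe that (i) rules out the degenerate case where the characteristic divides $t$. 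The passage from Capelli's condition (b) to condition (ii) is also handled correctly: under (i) the element $a$ is a non-square, and your coset analysis---that $-4(\mathbb{F}_q^*)^4$ is exactly the set of non-squares when $q\equiv 3\pmod 4$ (so (b) necessarily fails), while $-4$ lies in an even coset of $\mathbb{F}_q^*/(\mathbb{F}_q^*)^4$ when $q\equiv 1\pmod 4$ (so (b) holds automatically for the non-square $a$)---is precisely the right bookkeeping, and it isolates where the $q\equiv 1\pmod 4$ condition really comes from. The one caveat is that your argument is complete only modulo the Vahlen--Capelli theorem itself, which you sketch (induction on the prime factorization of $t$, with the Sophie Germain identity explaining the $-4K^4$ obstruction) rather than prove; since that theorem has the same classical standing as the Lidl--Niederreiter result being cited, this is a legitimate reduction, but a fully self-contained proof would still owe the prime-power induction and the coprime gluing step, which is where the genuine field-theoretic work lies.
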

\begin{lem}\cite[Theorem 3.1]{chen15}\label{chenfact} Suppose that $q-1=l^vc$, where $v>0$, $l$ is a prime, $\gcd(l,c)=1$ and $4|(q-1)$ if $l=2$. Then there is an irreducible factorization over $\mathbb{F}_q$:
\begin{eqnarray*}x^{\ell^n}-1=\prod_{k=0}^{l^n-1}(x-\eta^k)\prod_{a=1}^{n-v}\prod_{\substack{i=0\\\gcd(l,i)=1}}^{l^v-1}(x^{l^a}-\eta^i)\end{eqnarray*} where $\eta$ is a primitive $l^v$th root of unity in $\mathbb{F}_q$ and $n>v$.\end{lem}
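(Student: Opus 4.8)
The plan is to verify the displayed identity rootwise and then to establish the irreducibility of each nonlinear factor via Serret's criterion (Lemma~\ref{bir}). First I would observe that $l\mid(q-1)$ forces $\gcd(l,q)=1$, so $x^{l^n}-1$ is separable; it therefore suffices to exhibit a family of pairwise coprime monic irreducible polynomials, each dividing $x^{l^n}-1$, whose degrees sum to $l^n$. Every root of $x^{l^n}-1$ is an $l^n$th root of unity, and I would partition these roots according to their multiplicative order $l^j$, $0\le j\le n$. Since $q-1=l^vc$ with $\gcd(l,c)=1$, the exact power of $l$ dividing $q-1$ is $l^v$, so $l^{v+1}\nmid(q-1)$; hence precisely the roots of order $l^j$ with $j\le v$ lie in $\mathbb{F}_q$. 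These are exactly the $l^v$th roots of unity $\eta^0,\eta^1,\ldots,\eta^{l^v-1}$, and they account for the linear factors in the first product.

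Next I would treat the roots of order $l^{v+a}$ for $1\le a\le n-v$. If $\gcd(l,i)=1$ then $\eta^i$ is a primitive $l^v$th root of unity, and any $\beta$ with $\beta^{l^a}=\eta^i$ satisfies $\beta^{l^{v+a}}=1$ but $\beta^{l^{v+a-1}}=(\eta^i)^{l^{v-1}}\neq1$, so $\beta$ has order exactly $l^{v+a}$. Thus the $l^a$ distinct roots of $x^{l^a}-\eta^i$ are primitive $l^{v+a}$th roots of unity. Letting $i$ range over the $\phi(l^v)=l^{v-1}(l-1)$ residues coprime to $l$ produces $l^{v-1}(l-1)\cdot l^a=\phi(l^{v+a})$ distinct roots, which therefore exhaust all primitive $l^{v+a}$th roots of unity. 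The binomials $x^{l^a}-\eta^i$ (for varying $a$ and $i$) together with the linear factors are pairwise coprime, since their roots have pairwise distinct orders or, within a fixed $a$, distinct constant terms $\eta^i$; and the total degree telescopes to $l^v+\sum_{a=1}^{n-v}\phi(l^v)l^a=l^n$.

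The heart of the argument is the irreducibility of each $x^{l^a}-\eta^i$ over $\mathbb{F}_q$, for which I would apply Lemma~\ref{bir} with $t=l^a$ and $\alpha=\eta^i$ of order $e=l^v$. Condition (i) holds because $l$ is the only prime dividing $t$, with $l\mid l^v=e$ while $l\nmid(q-1)/e=c$ by $\gcd(l,c)=1$. Condition (ii) is relevant only when $l=2$ and $a\ge2$, i.e.\ when $4\mid t$; in that case the standing hypothesis $4\mid(q-1)$ gives $q\equiv1\pmod 4$. Hence both conditions of Serret's criterion are satisfied, and every nonlinear factor is irreducible, while the linear factors are trivially so.

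I expect the main obstacle to be purely bookkeeping: confirming that the roots of the binomials have the claimed order $l^{v+a}$ and that all factors are genuinely pairwise coprime, so their product is exactly $x^{l^n}-1$ rather than a proper divisor. Once the order computation $\mathrm{ord}(\beta)=l^{v+a}$ is pinned down, the degree count together with the separability of $x^{l^n}-1$ makes the factorization identity automatic, and the irreducibility follows at once from Lemma~\ref{bir}.
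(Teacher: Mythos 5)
Your proof is correct, but there is nothing in the paper to compare it against: the paper states this lemma with a citation to Chen--Liu--Zhang (Theorem 3.1 of the cited work) and gives no proof of its own. Judged on its merits, your argument is sound and complete, and it is essentially the standard proof of this factorization: separability of $x^{l^n}-1$ from $\gcd(l,q)=1$; the partition of the roots by exact multiplicative order $l^j$, with the orders $j\le v$ accounting for exactly the roots lying in $\mathbb{F}_q$; the observation that the roots of $x^{l^a}-\eta^i$ with $\gcd(l,i)=1$ are precisely primitive $l^{v+a}$th roots of unity, so that ranging over $i$ exhausts them by the count $\phi(l^v)l^a=\phi(l^{v+a})$; pairwise coprimality plus the degree sum $l^v+\sum_{a=1}^{n-v}\phi(l^v)l^a=l^n$ to force equality; and Serret's criterion (Lemma~\ref{bir}) with $t=l^a$, $e=l^v$, $(q-1)/e=c$ for irreducibility, the hypothesis $4\mid(q-1)$ covering the case $l=2$, $a\ge 2$. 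One point you should state explicitly rather than silently: as printed in the paper the first product runs over $k=0,\dots,l^n-1$, which cannot be correct (that product alone would have degree $l^n$, and $\eta^k$ repeats with period $l^v$); your proof implicitly replaces the upper limit by $l^v-1$, which matches the original statement in Chen et al.\ and is confirmed by your own degree count, so the correction is right but deserves a sentence.
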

\section{Explicit factorization of $x^{\ell^n}-\mu\in\mathbb{F}_q[x]$}

 For each integer $n\ge1$ and $\ell$ is a prime,  $\nu_\ell(n)$  denotes the maximum power of $\ell$ that divides $n$. For any odd prime power $q$, let $s=\nu_\ell(q-1)$. Observe that if $\gcd(\ell,q-1)=1$, then $s=0$; otherwise  $\ell^k|(q-1)$ for all $1\le k\le s$. Let $\mu_k$ be a primitive $\ell^k$th root of unity in $\mathbb{F}_q^*$.   Consider  the binomial $x^{\ell^n}-\mu_k\in\mathbb{F}_q[x]$ for $1\le k\le s$ and integer $n\ge 1$.   Note that for $k=0$, the factorization of $x^{\ell^n}-\mu_k$ over $\mathbb{F}_q$ is given in Lemma \ref{chenfact}.

 Although the following theorem follows immediately from
\cite[Theorem 4.1(ii)]{chen12}, however, for the sake of completeness of this paper, we rephrase and  also provide a proof of it.
\begin{thm}\label{bifactq} Let $\gcd(q,\ell)=1$, $r=\min\{n,s-k\}$, $1\le k\le s$, and $s\ne 1$ if $\ell=2$.  Then, for every $n\ge1$, the irreducible factorization of $x^{\ell^n}-\mu_k$ is given by  \begin{eqnarray*}x^{\ell^n}-\mu_k=
\prod_{i=1}^{\ell^{r}}(x^{\ell^{n-r}}-\mu_{k+r}^{\ell^ki+1}).\end{eqnarray*}\end{thm}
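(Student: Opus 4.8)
The plan is to prove the identity by showing that the $\ell^{r}$ binomials on the right-hand side are (i) each irreducible over $\mathbb{F}_q$, (ii) pairwise distinct and hence pairwise coprime, and (iii) each a divisor of $x^{\ell^{n}}-\mu_k$; a comparison of degrees then forces the product to equal $x^{\ell^{n}}-\mu_k$, and irreducibility upgrades this equality to \emph{the} irreducible factorization. Before starting I would fix a compatible system of roots of unity by choosing a primitive $\ell^{s}$th root $\mu_s$ and setting $\mu_j=\mu_s^{\ell^{s-j}}$ for $1\le j\le s$; this makes $\mu_j$ a primitive $\ell^{j}$th root and yields the key relation $\mu_{k+r}^{\ell^{r}}=\mu_k$, which is precisely the identity that places the right-hand side inside $x^{\ell^{n}}-\mu_k$.

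It is convenient to split on whether $n-r=0$. Since $r=\min\{n,s-k\}$, one has $n-r\ge1$ exactly when $s-k<n$, in which case $r=s-k$, so $k+r=s$ and the relevant root of unity is $\mu_{k+r}=\mu_s$. The degenerate case $r=n$ (when $n\le s-k$) is immediate: each factor $x-\mu_{k+n}^{\ell^{k}i+1}$ is linear, the computation $(\mu_{k+n}^{\ell^{k}i+1})^{\ell^{n}}=\mu_{k+n}^{\ell^{n}}=\mu_k$ shows each constant is a root of $x^{\ell^{n}}-\mu_k$, and distinctness of the $\ell^{n}$ constants follows because $\ell^{k}i+1$ are pairwise incongruent modulo $\ell^{k+n}$ for $1\le i\le\ell^{n}$; this exhibits $\ell^{n}$ distinct roots of a monic polynomial of degree $\ell^{n}$.

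For the main case $r=s-k<n$ I would apply Serret's criterion (Lemma~\ref{bir}) to each binomial $x^{t}-a$ with $t=\ell^{n-r}\ge2$ and $a=\mu_s^{\ell^{k}i+1}$. The crucial arithmetic point is the order $e$ of $a$ in $\mathbb{F}_q^{*}$: since $\ell^{k}i+1\equiv1\pmod{\ell}$ it is coprime to $\ell^{s}$, so $a$ is again a primitive $\ell^{s}$th root of unity and $e=\ell^{s}$. The only prime dividing $t$ is $\ell$, which divides $e=\ell^{s}$ but not $(q-1)/e=(q-1)/\ell^{s}$ because $\ell^{s}\|(q-1)$; this gives condition (i). Condition (ii) is exactly where the hypothesis ``$s\ne1$ if $\ell=2$'' enters: it forces $s\ge2$, hence $4\mid(q-1)$ and $q\equiv1\pmod4$, which covers the case $t\equiv0\pmod4$. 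Thus each binomial is irreducible.

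Finally I would verify distinctness and divisibility. Two constants coincide, $\mu_s^{\ell^{k}i+1}=\mu_s^{\ell^{k}j+1}$, iff $i\equiv j\pmod{\ell^{s-k}}$, so for $1\le i\le\ell^{r}=\ell^{s-k}$ the factors are pairwise distinct irreducibles and therefore pairwise coprime. That each divides $x^{\ell^{n}}-\mu_k$ follows by raising a root $\zeta$ (with $\zeta^{\ell^{n-r}}=a$) to the $\ell^{r}$th power: $\zeta^{\ell^{n}}=a^{\ell^{r}}=\mu_s^{(\ell^{k}i+1)\ell^{s-k}}=\mu_s^{\ell^{s-k}}=\mu_k$, so $\zeta$ is a common root of the irreducible factor and of $x^{\ell^{n}}-\mu_k$, whence the factor divides it. The product of these $\ell^{r}$ coprime monic divisors then divides $x^{\ell^{n}}-\mu_k$ and has degree $\ell^{r}\cdot\ell^{n-r}=\ell^{n}$, so equality holds. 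I expect the order computation $e=\ell^{s}$ and its interplay with condition (i) of Serret's criterion to be the heart of the argument, while the bookkeeping of the compatible roots of unity is the step most likely to conceal a subtlety.
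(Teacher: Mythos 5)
Your proof is correct, and it reaches the factorization by a different mechanism than the paper, although the heart of both arguments is identical: Serret's criterion (Lemma~\ref{bir}) applied with order $e=\ell^{s}$, plus the hypothesis $s\neq 1$ when $\ell=2$ to secure $q\equiv 1\pmod 4$. The paper first establishes the product identity as a pure polynomial identity: since $\ell^{r}\mid(q-1)$, the polynomial $x^{\ell^{r}}-1$ splits as $\prod_{i=1}^{\ell^{r}}(x-\mu_r^{i})$ over $\mathbb{F}_q$, and substituting $x^{\ell^{n-r}}/\mu_{k+r}$ and clearing the factor $\mu_{k+r}^{\ell^{r}}$ gives $x^{\ell^{n}}-\mu_k=\prod_{i=1}^{\ell^{r}}(x^{\ell^{n-r}}-\mu_r^{i}\mu_{k+r})$ in one stroke; irreducibility of the factors is checked afterwards, separately. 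You instead reconstruct the identity from the candidate factors by the divisor-collection route: irreducibility (Serret), pairwise distinctness, divisibility of each factor into $x^{\ell^{n}}-\mu_k$ via a root computation, and a degree count. Both are sound. The paper's substitution argument is shorter and decouples the factorization identity (which needs nothing beyond $\ell^{r}\mid(q-1)$) from the irreducibility question; your argument costs more bookkeeping but makes explicit two things the paper leaves implicit: the compatibility convention $\mu_j=\mu_s^{\ell^{s-j}}$, without which the paper's step $x^{\ell^{n}}-\mu_k=(x^{\ell^{n-r}})^{\ell^{r}}-\mu_{k+r}^{\ell^{r}}$ is unjustified, and the pairwise distinctness of the $\ell^{r}$ factors, which the paper only verifies later, inside the proof of Lemma~\ref{equiv}. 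One small point to add: since the theorem starts from a prescribed $\mu_k$, you should note that $\mu_s$ can be chosen so that $\mu_s^{\ell^{s-k}}=\mu_k$ (the $\ell^{s-k}$-power map is surjective from the group of $\ell^{s}$-th roots of unity onto that of $\ell^{k}$-th roots of unity, carrying generators to generators), so your compatible system loses no generality.
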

\begin{proof} Since $r=\min\{n,s-k\}$ and $1\le k\le s$, so  $\ell^r|(q-1)$ and hence the element $\mu_r\in\mathbb{F}_q^*$. Thus $\mathbb{F}_q$ is  the minimal splitting field of $x^{\ell^r}-1$ over itself. It follows that
\begin{eqnarray}\label{fact2r}
x^{\ell^r}-1=\prod_{i=1}^{\ell^r}(x-\mu_r^i).
\end{eqnarray} Since $n\ge r$,  the factorization of $x^{\ell^n}-\mu_k$ can be expressed as:  \begin{eqnarray*}
x^{\ell^n}-\mu_k=(x^{\ell^{n-r}})^{\ell^r}-\mu_{k+r}^{\ell^r}=
\mu_{k+r}^{\ell^r}\bigg(\bigg(\frac{x^{\ell^{n-r}}}{\mu_{k+r}}\bigg)^{\ell^r}-1\bigg).
\end{eqnarray*} From the expression \ref{fact2r}, the factorization of $x^{\ell^n}-\mu_k$ becomes: \begin{eqnarray*}
x^{\ell^n}-\mu_k&=&\prod_{i=1}^{\ell^r}(x^{\ell^{n-r}}-\mu_r^i\mu_{k+r})\\&=&\prod_{i=1}^{\ell^r}(x^{\ell^{n-r}}-\mu_{k+r}^{\ell^ki+1}).
\end{eqnarray*} 
If $n\le s-k$, then $r=n$ and hence each factor of $x^{\ell^n}-\mu_k$ is linear over $\mathbb{F}_q$. And  if $n\ge s-k$, then $r=s-k$. It follows that the order of $\mu_{k+r}^{\ell^ki+1}$ is $\ell^{s}$. Therefore, by Lemma \ref{bir},  all binomials $x^{\ell^{n-r}}-\mu_{k+r}^{\ell^ki+1}$ are irreducible over $\mathbb{F}_q$ except the case $s=1$ i.e., $q\equiv3\pmod 4$ when $\ell=2$. Consider the exceptional case, i.e.,   $s=1$ and $\ell=2$, then $x^{2^n}-\mu_k=x^{2^n}+1$. It is well-known that it is irreducible for $n=1$ and reducible for $n\ge 2$ over $\mathbb{F}_q$, where $q\equiv3\pmod 4$.\end{proof} 
\begin{cor} Let $q$ be odd and $r=\min\{n,s-k\}$ and $1\le k\le s$. Then a factorization of $x^{2^n}-\mu_k$ over $\mathbb{F}_q$ is given by:\begin{eqnarray*}x^{2^n}-\mu_k=
\prod_{i=1}^{2^{r}}(x^{2^{n-r}}-\mu_{k+r}^{2^ki+1}).\end{eqnarray*} Further, if $s\neq 1$, then the factorization is  irreducible over $\mathbb{F}_q$. Also if $s=1$, then the irreducible factorization of $x^{2^n}-\mu_k$ is given in Lemma \cite[Lemma 2.6]{singh}.\end{cor}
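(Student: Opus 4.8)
The plan is to read off everything from the specialization $\ell=2$ of Theorem \ref{bifactq}, treating the algebraic factorization and the irreducibility assertion separately, and then disposing of the excluded case $s=1$ by citation.

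First I would observe that the displayed product is exactly the formula of Theorem \ref{bifactq} with $\ell=2$. The key point is that the derivation of this identity in the proof of Theorem \ref{bifactq} --- writing $x^{\ell^n}-\mu_k=\mu_{k+r}^{\ell^r}\big((x^{\ell^{n-r}}/\mu_{k+r})^{\ell^r}-1\big)$ and then substituting the splitting \eqref{fact2r} of $x^{\ell^r}-1$ --- uses only that $\mu_r\in\mathbb{F}_q^*$ (which holds because $r\le s-k$ gives $\ell^r\mid(q-1)$, and likewise $k+r\le s$ guarantees $\mu_{k+r}\in\mathbb{F}_q^*$), and never invokes the hypothesis $s\ne1$. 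Hence the product decomposition is valid for every odd $q$ and all $1\le k\le s$, including the case $s=1$.

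Second, for the irreducibility claim when $s\ne1$: this is immediate from Theorem \ref{bifactq} applied with $\ell=2$, since under $s\ne1$ each binomial $x^{2^{n-r}}-\mu_{k+r}^{2^ki+1}$ is irreducible over $\mathbb{F}_q$ by Serret's criterion (Lemma \ref{bir}), the order of $\mu_{k+r}^{2^ki+1}$ being $2^s$ (the exponent $2^ki+1$ is odd, hence coprime to $2^s$).

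Finally I would treat the exceptional case $s=1$. Here $\ell=2$ and $s=1$ force $k=1$ and $\mu_1=-1$, so $x^{2^n}-\mu_1=x^{2^n}+1$, i.e.\ we are in the regime $q\equiv3\pmod 4$. As already noted in the proof of Theorem \ref{bifactq}, the binomials appearing in the product need no longer be irreducible in this regime, so the displayed decomposition is no longer the irreducible factorization; instead I would simply cite \cite[Lemma 2.6]{singh}. The only subtlety worth flagging is the first step: one must be careful to separate the purely algebraic factorization (valid for all $s\ge1$) from the irreducibility statement (which requires $s\ne1$), so that the displayed product may legitimately be asserted even when $s=1$, where it fails to be the irreducible factorization.
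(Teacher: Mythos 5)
Your proposal is correct and takes essentially the same route as the paper, whose entire proof is the one-line remark that the corollary follows from Theorem \ref{bifactq} by substituting $\ell=2$. Your added care in separating the purely algebraic product identity (which holds even when $s=1$, where $r=0$ renders the product trivial) from the irreducibility assertion (which needs $s\neq 1$) is a legitimate refinement that the paper's terse proof glosses over, but it is the same underlying argument.
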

\begin{proof} The factorization of $x^{2^n}-\mu_k$ follows from Theorem \ref{bifactq} immediately by substituting $\ell=2$.    \end{proof}
 \begin{rem} For each fixed $1\le k\le s$,  the factorization of $x^{\ell^n}-\mu_k$ given in Theorem \ref{bifactq} is also valid every odd prime $\ell$ and $\gcd(\ell,q)=1$.  In particular,  the binomial $x^{\ell^n}-\mu_s$ is irreducible over $\mathbb{F}_q$. \end{rem}  
\begin{Exaexa}\label{exfact163}   Let  $q=163$ and $n=5$. Then $\ell=3$ and  $s=4$. Observe that $\mu_4=-18$, $\mu_3=36$, $\mu_2=38$, $\mu_1=104$.   
The explicit   factorization of $x^{243}-\mu_k$ is given by Lemma  \ref{bifactq} and Table \ref{t243} for each $0\le k\le 3$. 
\begin{table}\label{t243}\begin{center} Table \ref{t243}\\ Parameter of the factorization $x^{243}-\mu_k$  \\
\begin {tabular}{|cccccc|}\hline
$k$&$r=4-k$ &$\mu_k$ & $degree$&coefficients in $\mathbb{F}_{163}^*$&\\&&&$3^{5-r}$&$\{(-18)^{3^{k}i+1}:1\le i\le 3^r\}$& \\\hline
$1$&$3$& $104$ &$9$&$\{(36)^i(-18):1\le i\le 27\}$&\\ 
$2$&$2$& $38$ &$27$&$\{(38)^i(-18):1\le i\le 9\}$&\\
$3$&$1$& $36$ &$81$&$\{(104)^i(-18):1\le i\le 3\}$&\\
$4$&$0$& $-18$ &$243$&$\{(-18)\}$& \\
 \hline
\end{tabular}\end{center}\end{table}
From Table \ref{t243},  the factorization of $x^{243}-38$ is given by: \begin{eqnarray*}x^{243}-38&=&\prod_{i=1}^{3^2}(x^{3^{5-2}}-\mu_{4}^{9i+1})\\&=&\prod_{i=1}^{9}(x^{27}-(-18)^{9i+1})=\prod_{i=1}^{9}(x^{27}-38^i\cdot(-18))\\&=&(x^{27}+32)(x^{27}+75)(x^{27}+79)(x^{27}+68)(x^{27}-24)\\&&(x^{27}+66)(x^{27}+63)(x^{27}-51)(x^{27}+18).\end{eqnarray*}
\end{Exaexa}
\begin{Exaexa} \label{wq97}  Let  $q=97$, $n=8$ and $k=2$. Then $s=5$ and  $r=s-k=3$. Observe that $\mu_5=42$ and $\mu_2=22$.   By Lemma \ref{bifactq},  the factorization of $x^{256}-22$ is given as:  \begin{eqnarray*}x^{256}-22&=&\prod_{i=1}^{8}(x^{32}-\mu_{5}^{4i+1})=\prod_{i=1}^{4}(x^{32}\pm\mu_{5}^{4i+1})\\&=&\prod_{i=1}^{4}(x^{32}\pm42^{4i+1})=\prod_{i=1}^{4}(x^{32}\pm33^i\times42)\\&=&(x^{32}\pm28)(x^{32}\pm46)(x^{32}\pm34)(x^{32}\pm42).\end{eqnarray*}
\end{Exaexa}

\section{Constacyclic codes and their weight distributions}
In this section,  we obtain the explicit expressions of generator polynomials and  the form of codewords of  all  irreducible $\mu$-constacyclic codes of length $\ell^n$ over $\mathbb{F}_q$, where $\ell$ is a prime. Further,  we  also obtain the weight distribution of $\mu$-constacylic codes of length $\ell^n$. 

 Let $\ell$ be a prime (not necessarily odd prime) and $\ell^k|(q-1)$ for $k\ge 1$, and $4|(q-1)$ if $\ell=2$. The explicit factorization of $x^{\ell^n}-\mu_k$ is given in Theorem \ref{bifactq}. Recall  that $r=\min\{n,s-k\}$, where $1\le k\le s$, $s\neq 1$ if $\ell=2$ and $n\ge1$. Then, for each $1\le i\le \ell^r$, in view of Theorem \ref{bifactq},  $x^{\ell^{n-r}}-\mu^{\ell^ki+1}_{{k+r}}$ is a divisor of $x^{\ell^n}-\mu_{k}$. Let $C_{k,i}$ be an $\mu_k$-constacyclic $[\ell^n,\ell^{n-r}]_q$ code over $\mathbb{F}_q$ with the parity check polynomial $x^{\ell^{n-r}}-\mu^{\ell^ki+1}_{{k+r}}$ for $1\le i\le \ell^r$. Then  $$\displaystyle{C_{k,i}=\bigg\langle\frac{x^{\ell^{n}}-\mu_{k}}{x^{\ell^{n-r}}-\mu^{\ell^ki+1}_{k+r}}}\bigg\rangle$$   is a minimal ideal in $\mathbb{F}_q[x]/\langle x^{\ell^n}-\mu_k\rangle$.  \begin{lem}\label{equiv}  For each fixed $1\le k\le s$, there are $\ell^{r}$ equivalent  $\mu_k$-constacyclic $[\ell^n,\ell^{n-r}]_q$ codes, where $\ell$ is a prime and $r=\min\{n,s-k\}$.  \end{lem}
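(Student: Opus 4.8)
The plan is to read off the two numerical assertions directly from Theorem \ref{bifactq}, and then to prove the equivalence by exhibiting, for any two indices, an explicit multiplier relating the defining sets of the corresponding generator polynomials, so that Proposition \ref{qchain} and Lemma \ref{cc} apply. By Theorem \ref{bifactq} the polynomial $x^{\ell^n}-\mu_k$ splits into exactly $\ell^r$ distinct irreducible factors $f_i(x)=x^{\ell^{n-r}}-\mu_{k+r}^{\ell^k i+1}$, $1\le i\le \ell^r$, each of degree $\ell^{n-r}$. Hence there are exactly $\ell^r$ minimal codes $C_{k,i}$, and since the check polynomial $f_i$ has degree $\ell^{n-r}$, each $C_{k,i}$ has dimension $\ell^{n-r}$ and length $\ell^n$; this already yields the parameters $[\ell^n,\ell^{n-r}]_q$. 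It remains to prove that all $C_{k,i}$ are equivalent.

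Next I would pass to the common splitting field. Let $N=\ell^{n+k}$ and fix a root $\eta$ of $x^{\ell^n}-\mu_k$ with $\eta^{\ell^{n-r}}=\mu_{k+r}$; this is possible because $\mu_{k+r}^{\ell^r}=\mu_k$, and such an $\eta$ has order exactly $N$, i.e. is a primitive $N$th root of unity. A short computation then shows that the roots of $x^{\ell^n}-\mu_k$ are $\{\eta^a:a\equiv 1\pmod{\ell^k}\}$, while the roots of the $i$th factor are indexed by
\[
A_i=\{\,a\in\mathbb Z/N\ :\ a\equiv \ell^k i+1 \pmod{\ell^{k+r}}\,\}.
\]
Because $k+r=\min\{n+k,s\}\le s$, we have $q\equiv1\pmod{\ell^{k+r}}$, so each $A_i$ is closed under multiplication by $q$; moreover all elements of $A_i$ are units modulo $N$, so every $q$-chain meeting $A_i$ has the same size $\ell^{n-r}=|A_i|$, whence $A_i$ is exactly one $q$-chain and $f_i(x)=\prod_{a\in A_i}(x-\eta^a)$. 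Consequently the generator polynomial of $C_{k,i}$ is $g_i(x)=\prod_{a\in\Omega_i}(x-\eta^a)$, where $\Omega_i=\bigcup_{j\ne i}A_j$ is a union of $q$-chains modulo $N$.

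The heart of the argument is then a single multiplier. Given indices $i,j$, put $c\equiv(\ell^k j+1)(\ell^k i+1)^{-1}\pmod{\ell^{k+r}}$; the inverse exists since $\ell^k i+1$ is a unit, and reducing modulo $\ell^k$ gives $c\equiv1\pmod{\ell^k}$, so in particular $\gcd(c,\ell)=1$ and $\gcd(c,N)=1$. One checks at once that $cA_i=A_j$ and, since $c\equiv1\pmod{\ell^k}$, that multiplication by $c$ fixes the full root set $\{a\equiv1\pmod{\ell^k}\}$; therefore $c\,\Omega_i=\Omega_j$. Taking $\eta_1=\eta$ and $\eta_2=\eta^c$ (both primitive $N$th roots of unity) and $\Omega=\Omega_i$ in Proposition \ref{qchain}, the polynomials $\prod_{a\in\Omega_i}(x-\eta^a)=g_i(x)$ and $\prod_{a\in\Omega_i}(x-\eta^{ca})=\prod_{b\in\Omega_j}(x-\eta^{b})=g_j(x)$ generate equivalent codes, i.e. $C_{k,i}$ and $C_{k,j}$ are equivalent. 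Lemma \ref{cc} then confirms they share the parameters found above, completing the proof.

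The step I expect to require the most care is reconciling the length: Proposition \ref{qchain} is phrased for the cyclic codes whose length is the order $N=\ell^{n+k}$ of the roots, whereas the $C_{k,i}$ have length $\ell^n$. I would make the application rigorous by realizing the multiplier equivalence directly at length $\ell^n$ through the substitution $x\mapsto x^{c}$ on $\mathbb F_q[x]/\langle x^{\ell^n}-\mu_k\rangle$: the congruence $c\equiv1\pmod{\ell^k}$ gives $\mu_k^{c}=\mu_k$, so this is a ring automorphism, and since $\gcd(c,\ell)=1$ it acts on coordinates as the permutation $t\mapsto ct\pmod{\ell^n}$ together with rescalings by powers of $\mu_k$, i.e. as a monomial transformation, which is an equivalence of codes (a permutation of positions composed with scalings of fixed positions). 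Evaluating at the roots $\eta^a$ shows this map carries $C_{k,i}$ onto $C_{k,j}$ in accordance with $c\,\Omega_i=\Omega_j$. As a sanity check, when $n\le s$ one may argue more transparently with $\lambda=\mu_n\in\mathbb F_q$, since $f_i(\lambda x)=\mu_r f_{i-1}(x)$ cyclically permutes the factors; this elementary substitution, however, stalls precisely when $n>s$, which is exactly the regime where the $q$-chain (multiplier) argument above is indispensable.
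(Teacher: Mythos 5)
Your proof is correct, and its overall strategy coincides with the paper's: first show the $\ell^r$ codes $C_{k,i}$ are pairwise distinct, then deduce their equivalence from Proposition \ref{qchain}, with Lemma \ref{cc} transferring the parameters. The difference lies in how Proposition \ref{qchain} is invoked. The paper applies it tersely, taking $\eta_1=\mu_{n+k}^{\ell^ki+1}$ and $\eta_2=\mu_{n+k}^{\ell^kj+1}$, leaving unspecified both the set $\Omega$ and the fact that, on its face, the proposition yields equivalence of codes whose length is $\ell^{n+k}$ (the order of the roots of unity), not of the length-$\ell^n$ constacyclic codes $C_{k,i}$. You make the exponent bookkeeping explicit (the sets $A_i$, $\Omega_i$, the multiplier $c\equiv(\ell^kj+1)(\ell^ki+1)^{-1}\pmod{\ell^{k+r}}$) and, more importantly, you repair the length mismatch by realizing the multiplier as the substitution $v(x)\mapsto v(x^c)$ on $\mathbb{F}_q[x]/\langle x^{\ell^n}-\mu_k\rangle$, well defined precisely because $c\equiv1\pmod{\ell^k}$ forces $\mu_k^c=\mu_k$, and monomial because $\gcd(c,\ell)=1$; this is in substance the isometry viewpoint of Chen et al. \cite{chen12}, and it is exactly the step the paper's own proof glosses over. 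Two harmless compressions are worth tightening: distinctness of the codes should be pinned to the disjointness of the classes $\ell^ki+1\pmod{\ell^{k+r}}$ (equivalently, $\mu_{k+r}^{\ell^k}$ has order $\ell^r$, which is the paper's first paragraph), and your claim that each $A_i$ is a single $q$-chain is most cleanly justified by the irreducibility of $f_i$ from Theorem \ref{bifactq} (roots of an irreducible polynomial form one Frobenius orbit) --- though for Proposition \ref{qchain} you only need $\Omega_i$ closed under multiplication by $q$, which already follows from $q\equiv1\pmod{\ell^{k+r}}$.
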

\begin{proof} First we shall show that there are precisely $\ell^r$ constacylic code $C_{k,i}$ for $1\le i\le \ell^r$. Let $C_{k,i}=C_{k,j}$ for any $1\le i,j\le \ell^r$. Then, by definition of $C_{k,i}$, we have $\mu_{k+r}^{\ell^ki+1}=\mu_{k+r}^{\ell^kj+1}$. It follows that $\mu_{k+r}^{\ell^k(i-j)}=1$ for  any $1\le i,j\le \ell^r$. It gives $\ell^r|(i-j)$ for any $1\le i,j\le \ell^r$ and  hence $i=j$.  This proves that $C_{k,i}$ are distinct $\mu_k$-constacyclic codes  for $1\le i\le \ell^r$. 
  
Now, since the order of $\mu_{k+r}^{\ell^ki+1}$ is $\ell^{k+r}$, so the order of  the binomial $x^{\ell^n}-\mu_k$ is $\ell^{n+k}$ for $1\le i\le \ell^r$, $r=\min\{n,s-k\}$ and $1\le k\le s$. It follows that $x^{\ell^{n-r}}-\mu_{k+r}^{\ell^ki+1}$ is the minimal polynomial of $\mu_{n+k}^{\ell^ki+1}$ over $\mathbb{F}_q$. Applying  Proposition \ref{qchain} with $\eta_1=\mu_{n+k}^{\ell^ki+1}$ and $\eta_2=\mu_{n+k}^{\ell^{k}j+1}$ for $1\le i\neq j\le \ell^r$, we find that the polynomials $x^{\ell^n}-\mu_{k+r}^{\ell^ki+1}$ and $x^{\ell^n}-\mu_{k+r}^{\ell^kj+1}$ generate equivalent codes. Therefore $C_{k,i}$ and $C_{k,j}$ are equivalent codes for every $1\le i,j\le \ell^r$. \end{proof}

For each fixed $1\le i\le \ell^r$, from Lemma \ref{equiv}, the constacyclic code $C_{k,i}$ equivalent is equivalent to $C_{k,j}$ for every $1\le j\le \ell^r$. In particular,  the code $C_{k,i}$ is equivalent to $C_{k,\ell^r}$. For convenience point of view, we denote $C_{k,\ell^r}$ by $C_k$.   By Lemma \ref{cc}, since equivalent linear codes share the same weight distributions, so it is sufficient to determine the weight distributions of $C_k$, an $\mu_k$-constacyclic code of length $\ell^n$ with the parity check polynomial $x^{\ell^{n-r}}-\mu_{k+r}$, where $1\le k\le s$ and $k\ge2$ if $\ell=2$.  In the next result,  we give the explicit expressions of the generator polynomials and the form of codewords of all irreducible $\mu_k$-constacyclic code $C_k$ of length $\ell^n$ over $\mathbb{F}_q$.

\begin{thm}\label{concode} Let $\ell$ be a prime and $1\le k\le s$ with $s\neq1 $ if $\ell=2$. Then   the generator polynomial $g_k(x)$ of $C_k$ is given by $$g_k(x)=\displaystyle{\sum_{i=0}^{\ell^{r}-1}
\mu_{k+r}^ix^{\ell^{n-r}(\ell^{r}-i-1)}}.$$
 Further, if ${\bf{a}}\in\mathbb{F}_q^{\ell^{n-r}}$ be any message word,  then  $$C_k=\{({\bf{a}}, {\bf{a}}\mu_{k+r}, \cdots, {\bf{a}}\mu_{k+r}^{{\ell^{r}}-1})
\}.$$ \end{thm}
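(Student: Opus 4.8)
The plan is to treat the two assertions separately: first pin down the generator polynomial by a single division, then read off the shape of the codewords by multiplying a message polynomial by this generator. For the generator polynomial, I would begin from the observation that $C_k = C_{k,\ell^r}$ has check polynomial $h_k(x) = x^{\ell^{n-r}} - \mu_{k+r}$; this is exactly the case $i = \ell^r$ in Theorem \ref{bifactq}, because $\mu_{k+r}$ has order $\ell^{k+r}$ and hence $\mu_{k+r}^{\ell^k \ell^r + 1} = \mu_{k+r}^{\ell^{k+r}+1} = \mu_{k+r}$. Thus $g_k(x) = (x^{\ell^n} - \mu_k)/h_k(x)$. Substituting $y = x^{\ell^{n-r}}$ and using the compatibility $\mu_{k+r}^{\ell^r} = \mu_k$ already exploited in the proof of Theorem \ref{bifactq}, the quotient becomes $(y^{\ell^r} - \mu_{k+r}^{\ell^r})/(y - \mu_{k+r})$. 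Expanding this as the finite geometric sum $\sum_{i=0}^{\ell^r - 1} \mu_{k+r}^i\, y^{\ell^r - 1 - i}$ and re-substituting $y = x^{\ell^{n-r}}$ yields precisely the claimed expression for $g_k(x)$.

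For the codewords, since $g_k(x)$ generates $C_k$ and $\deg g_k = \ell^n - \ell^{n-r}$ while $\dim C_k = \ell^{n-r}$, every codeword is uniquely of the form $A(x)\, g_k(x)$, where $A(x)$ is the message polynomial of degree $< \ell^{n-r}$ associated with $\mathbf{a}$; no reduction modulo $x^{\ell^n} - \mu_k$ occurs, since $\deg(A\, g_k) \le \ell^n - 1$. The essential point is that the nonzero terms of $g_k(x)$ occur at exponents that are consecutive multiples of $\ell^{n-r}$, spaced exactly $\ell^{n-r}$ apart. Multiplying by $A(x)$, whose support lies in degrees $0, \ldots, \ell^{n-r} - 1$, therefore produces $\ell^r$ non-overlapping shifted copies of $A(x)$, the $j$-th copy scaled by $\mu_{k+r}^j$. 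Reading these coefficient blocks through the vector–polynomial identification of Section 2, the top block is $\mathbf{a}$, the next is $\mathbf{a}\mu_{k+r}$, and so on down to $\mathbf{a}\mu_{k+r}^{\ell^r - 1}$, which is exactly $(\mathbf{a}, \mathbf{a}\mu_{k+r}, \ldots, \mathbf{a}\mu_{k+r}^{\ell^r - 1})$.

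The geometric-sum expansion is routine; the step requiring genuine care is the bookkeeping in the second part. One must verify that the exponent gap in $g_k(x)$, which equals $\ell^{n-r}$, strictly exceeds the degree span of $A(x)$, so that the shifted copies do not interfere and each block is a clean scalar multiple of $\mathbf{a}$; and one must apply the ordering convention ``first coordinate equals highest-degree coefficient'' consistently so that the scalar $\mu_{k+r}^j$ attaches to the correct block in the correct order. I expect this indexing alignment, together with confirming the root-of-unity compatibility $\mu_{k+r}^{\ell^r} = \mu_k$, to be the main obstacle, albeit a minor one; the remainder is a direct computation.
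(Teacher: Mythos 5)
Your proposal is correct and follows essentially the same route as the paper's proof: both obtain $g_k(x)$ from the factorization $x^{\ell^n}-\mu_k=(x^{\ell^{n-r}}-\mu_{k+r})\sum_{i=0}^{\ell^r-1}\mu_{k+r}^i x^{\ell^{n-r}(\ell^r-i-1)}$ (your $y$-substitution is just this computation), and both then multiply a message polynomial by $g_k(x)$ and read off the non-overlapping coefficient blocks. Your added bookkeeping (the dimension count showing no reduction modulo $x^{\ell^n}-\mu_k$ occurs, and the explicit check that the block spacing $\ell^{n-r}$ prevents interference) is only a more careful rendering of what the paper's direct expansion already establishes.
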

\begin{proof}   Let $r=\min\{n,s-k\}$ and  $1\le k\le s$.  Since $\mu_{k}=\mu_{k+r}^{\ell^{r}}$, therefore, we have \begin{eqnarray*}x^{\ell^n}-\mu_{k}=(x^{\ell^{n-r}})^{\ell^{r}}-\mu_{k+r}^{\ell^{r}}=(x^{\ell^{n-r}}-\mu_{k+r})\big(\sum_{i=0}^{\ell^{r}-1}
\mu_{k+r}^ix^{\ell^{n-r}(\ell^{r}-i-1)}\big).\end{eqnarray*} 
In this way, the generator polynomial $g_k(x)$ of a $\mu_{k}$-constacyclic $[\ell^n,\ell^{n-r}]_q$ code $C_k$ is 
\begin{eqnarray*}g_k(x)=\dfrac{x^{\ell^n}-\mu_{k}}{x^{\ell^{n-r}}-\mu_{k+r}}=\sum_{i=0}^{\ell^{r}-1}
\mu_{k+r}^ix^{\ell^{n-r}(\ell^{r}-i-1)}.\end{eqnarray*}
 Let ${\bf{a}}=(a_0,a_1,\cdots,a_{\ell^{n-r}-1})\in\mathbb{F}_q^{\ell^{n-r}}$  be any message word. Then the corresponding message polynomials can be expressed as ${\bf{a}}(x)=\displaystyle{\sum_{j=0}^{\ell^{n-r}-1}
a_jx^{\ell^{n-r}-j-1}}$. It follows that  the code polynomial of $C_k$ is  \begin{eqnarray*}{\bf{a}}(x)g_k(x)&=&\left(\sum_{j=0}^{\ell^{n-r}-1}
a_jx^{\ell^{n-r}-j-1}\right)\left(\sum_{i=0}^{\ell^{r}-1}{\mu^i_{k+r}}x^{\ell^{n-r}(\ell^{r}-i-1)}\right)\\&=&\sum_{i=0}^{\ell^{r}-1}\mu_{k+r}^i
\left(\sum_{j=0}^{\ell^{n-r}-1}
a_jx^{\ell^{n-r}-j-1}\right)x^{\ell^{n-r}(\ell^{r}-i-1)}\\&=&\sum_{i=0}^{\ell^{r}-1}\sum_{j=0}^{\ell^{n-r}-1}
a_j\mu_{k+r}^ix^{\ell^{n-r}(\ell^r-i)-j-1}.\end{eqnarray*} Further, the $i$th component of codeword $(c_0^{*},c_1^*,\cdots, c_{\ell^{r}-1}^*)$ is  \begin{center}$c_i^*=(a_0\mu_{k+r}^i, a_1\mu_{k+r}^i, \cdots, a_{\ell^{n-r}-1}\mu_{k+r}^i)$. \end{center} If we denote ${\bf{a}}\theta=(a_0\theta, a_1\theta, \cdots, a_{\ell^{n-r}-1}\theta)$ for some $\theta\in\mathbb{F}_q^*$, then \begin{center}$C_k=\{({\bf{a}}, {\bf{a}}\mu_{k+r}, \cdots, {\bf{a}}\mu_{k+r}^{\ell^r-1}):{\bf{a}}\in\mathbb{F}_q^{\ell^{n-r}}\}$. \end{center}  This completes the proof.\end{proof}
 {\it{\bf{Note:}} In Theorem \ref{concode}, if we consider the case $s=1$, then $C_1=\mathbb{F}_q^{\ell^n}$.}

The following theorem determines the weight distribution of $C_k$.
\begin{thm}\label{wconcode}Let $\ell$ be a prime, $r=\min\{n,s-k\}$ and $1\le k\le s$, and $s\neq1 $ if $\ell=2$. Then the weight distribution of  $\mu_{k}$-constcyclic $[\ell^{n},\ell^{n-r}]_q$ code $C_k$ is  \begin{eqnarray*}A_{\ell^{r}j} = \dbinom{\ell^{n-r}}{j}(q-1)^j& \mbox{for} & 0\le j\le \ell^{n-r}
.\end{eqnarray*}\end{thm}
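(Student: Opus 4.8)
The plan is to read the weight distribution directly off the explicit description of the codewords obtained in Theorem \ref{concode}. That theorem tells us that every codeword of $C_k$ has the form $(\mathbf{a}, \mathbf{a}\mu_{k+r}, \cdots, \mathbf{a}\mu_{k+r}^{\ell^r-1})$ for a unique message word $\mathbf{a}\in\mathbb{F}_q^{\ell^{n-r}}$, so the code is parametrized bijectively by the $q^{\ell^{n-r}}$ choices of $\mathbf{a}$. The codeword is assembled from $\ell^r$ blocks, each block being a scalar multiple $\mathbf{a}\mu_{k+r}^i$ of the single vector $\mathbf{a}$. Since $\mu_{k+r}$ is a nonzero element of $\mathbb{F}_q^*$, each power $\mu_{k+r}^i$ is nonzero, and therefore for each coordinate position the entry $a_t\mu_{k+r}^i$ is nonzero if and only if $a_t\neq0$.

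The key observation I would isolate first is that the Hamming weight of a codeword depends only on the weight of $\mathbf{a}$. First I would fix $\mathbf{a}$ and let $w=\mathrm{wt}(\mathbf{a})$ denote the number of nonzero coordinates among $a_0,\dots,a_{\ell^{n-r}-1}$. Within the $i$-th block $\mathbf{a}\mu_{k+r}^i$, multiplication by the nonzero scalar $\mu_{k+r}^i$ preserves the support, so each block contributes exactly $w$ nonzero entries. As there are $\ell^r$ blocks, the total Hamming weight of the codeword is $\ell^r w$. This shows that every codeword has weight a multiple of $\ell^r$, and hence $A_m=0$ unless $m=\ell^r j$ for some $j$.

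Next I would count the codewords of each admissible weight. A codeword has weight $\ell^r j$ precisely when its defining message word $\mathbf{a}$ has weight $j$, where $0\le j\le \ell^{n-r}$. The number of vectors $\mathbf{a}\in\mathbb{F}_q^{\ell^{n-r}}$ of weight exactly $j$ is a standard count: choose which $j$ of the $\ell^{n-r}$ positions are nonzero in $\binom{\ell^{n-r}}{j}$ ways, and fill each chosen position with one of the $q-1$ nonzero field elements, giving $(q-1)^j$ possibilities. Since the correspondence $\mathbf{a}\mapsto(\mathbf{a},\mathbf{a}\mu_{k+r},\dots,\mathbf{a}\mu_{k+r}^{\ell^r-1})$ is a bijection onto $C_k$ and weight-preserving in the sense just described, we obtain
\begin{eqnarray*}
A_{\ell^r j}=\binom{\ell^{n-r}}{j}(q-1)^j \quad\text{for}\quad 0\le j\le \ell^{n-r},
\end{eqnarray*}
which is exactly the claimed distribution.

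I do not expect a serious obstacle here, since all the substantive algebraic work has already been done in Theorem \ref{concode}; the present statement is essentially a combinatorial corollary of that explicit codeword description. The only point requiring a little care is the justification that the map from message words to codewords is a genuine bijection that respects weights blockwise, so that no two distinct message words give the same codeword and the weight bookkeeping is exact. This is immediate from the invertibility of multiplication by the nonzero scalars $\mu_{k+r}^i$, but I would state it explicitly to make the counting rigorous.
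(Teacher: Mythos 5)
Your proposal is correct and follows essentially the same route as the paper: both read the weight distribution directly off the explicit codeword form $(\mathbf{a}, \mathbf{a}\mu_{k+r}, \dots, \mathbf{a}\mu_{k+r}^{\ell^r-1})$ from Theorem \ref{concode}, observe that each of the $\ell^r$ blocks contributes $\mathrm{wt}(\mathbf{a})$ nonzero entries since multiplication by a nonzero scalar preserves supports, and then count the message words of weight $j$ as $\binom{\ell^{n-r}}{j}(q-1)^j$. Your version is slightly more explicit about the bijection between message words and codewords and about why all weights are multiples of $\ell^r$, but the argument is the same.
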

\begin{proof} By Theorem \ref{concode}, the expression of codewords of $\mu_{k}$-constacyclic $[\ell^n,\ell^{n-r}]_q$ code $C_k$ over $\mathbb{F}_q$ is given by $$C_k=\{({\bf{a}}, {\bf{a}}\mu_{k+r}, \cdots, {\bf{a}}\mu_{k+r}^{{\ell^{r}}-1})
\},$$  where ${\bf{a}}=(a_0,a_1,\cdots,a_{\ell^{n-r}-1})\in\mathbb{F}_q^{\ell^{n-r}}$.  Let $j$  denote the number of nonzero symbols in a message word ${\bf{a}}=(a_0,a_1,\cdots,a_{\ell^{n-r}-1})$.  Then $0\le j\le \ell^{n-r}$ and  the weight of any codeword in $C$ is $\ell^{r}j$.  As the number of non-zero elements in $\mathbb{F}_q$ is $q-1$, so the number of combinations of $j$ nonzero symbols in ${\bf{a}}=(a_0,a_1,\cdots,a_{\ell^{n-r}-1})$  is $\left( \begin{array}{c} \ell^{n-r} \\ j \end{array} \right)(q-1)^j$. This completes the proof.\end{proof}

\begin{Exaexa}   Using notations of Example \ref{exfact163} in Theorem \ref{wconcode}, the weight distributions  of $38$-constcyclic $[243,27]_{163}$ code $C$ is  \begin{eqnarray*}A_{{9}j} = \dbinom{27}{j}(162)^j& \mbox{for} & 0\le j\le 27
.\end{eqnarray*} Table \ref{tw243} provides the weight distribution of all $\mu$-constacyclic codes of length $243$ over $\mathbb{F}_{163}$.
\begin{table}\label{tw243}\begin{center} Table \ref{tw243}\\ Parameter of the factorization $x^{243}-\mu_k$  \\
\begin {tabular}{|cccccc|}\hline
$k$&$r=4-k$ &$\mu_k$ & $Weight $&No. of codwords of weight $i$&\\&&&$i=3^rj$&$A_i=\dbinom{3^{5-r}}{j}(162)^j$&\\&&&$0\le j\le 3^{5-r}$&& \\\hline
$1$&$3$& $104$ &$27j$&$\dbinom{9}{j}(162)^j$&\\ &&&&&\\
$2$&$2$& $38$ &$9j$&$\dbinom{27}{j}(162)^j$&\\&&&&&\\
$3$&$1$& $36$ &$3j$&$\dbinom{81}{j}(162)^j$&\\&&&&&\\
$4$&$0$& $-18$ &$0\le j\le 243$&$\dbinom{243}{j}(162)^j$& \\
 \hline
\end{tabular}\end{center}\end{table}

\end{Exaexa}
\begin{Exaexa} Using notations of Example \ref{wq97}, 
the  generator polynomial of a $22$-constacyclic $[256,32]_{97}$ code $C=\bigg\langle\dfrac{x^{256}-22}{x^{32}-42}\bigg\rangle$ is $$x^{224}+42x^{192}+18x^{160}+77x^{128}+33x^{96}+
28x^{64}+12x^{32}+19.$$
  Then  the code polynomial of $C$ is  \begin{eqnarray*}\sum_{i=0}^{7}\sum_{j=0}^{31}
a_j(42)^ix^{256-32i-j-1}.\end{eqnarray*}
 In view of Theorem \ref{concode}, $22$-constacyclic code  of length $256$ over $\mathbb{F}_{97}$ is given by   $$C=\{({\bf{a}}, 42{\bf{a}}, \cdots,(42)^{7}{\bf{a}})
\}$$  with ${\bf{a}}=(a_0,a_1,\cdots, a_{31})\in\mathbb{F}_{97}^{32}$.
 By Theorem \ref{wconcode}, the weight distribution  of $22$-constcyclic $[256,32]_{97}$ code $C$ is  \begin{eqnarray*}A_{{8}j} = \dbinom{32}{j}(96)^j& \mbox{for} & 0\le j\le 32
.\end{eqnarray*}
\end{Exaexa}

\end {document}